\documentclass[a4paper,11pt,reqno]{amsart}
\usepackage{amsmath}
\usepackage{amssymb}

\theoremstyle{definition}
\newtheorem{theorem}{Theorem}[section]
\newtheorem{proposition}[theorem]{Proposition}
\newtheorem{lemma}[theorem]{Lemma}

\title[Incipient cluster in Ising percolation]{Incipient infinite cluster \\in 2D Ising percolation}
\author[Y. Higuchi]{Yasunari Higuchi}\thanks{Supported by JSPS Grant-in-Aid for Scientific Research (C) No. 23540136. (Y. H.)}
\address{Department of Mathematics, Kobe University, 1-1 Rokko, Kobe 657-8501, Japan.}
\email{higuchi@math.kobe-u.ac.jp}        
\author[K. Kinoshita]{Kazunari Kinoshita}
\address{Department of Mathematics, Kobe University, 1-1 Rokko, Kobe 657-8501, Japan.}
\author[M. Takei]{Masato Takei}\thanks{Partially supported by Osaka Electro-Communication University. (M. T.)}
\address{Department of Applied Mathematics, Faculty of Engineering, Yokohama National University, Hodogaya, Yokohama 240-8501, Japan.}
\email{takei@ynu.ac.jp}
\author[Y. Zhang]{Yu Zhang}
\address{Department of Mathematics, University of Colorado, Colorado Springs, CO 80933, USA.}
\email{yzhang3@uccs.edu}
\subjclass[2000]{Primary~60K35, Secondary~82B43}
\keywords{Percolation, Ising model, Incipient infinite cluster.}


\numberwithin{equation}{section}

\usepackage{mathrsfs}
\renewcommand{\mathcal}{\mathscr}


\begin{document}
\maketitle
\begin{abstract}
We consider the percolation problem in the high-temperature Ising model on the two-dimensional square lattice at or near critical external fields. The incipient infinite cluster (IIC) measure in the sense of Kesten is constructed. As a consequence, we can obtain some geometric properties of IIC. The result holds also for the triangular lattice.
\end{abstract}

\section{Introduction}
We consider the square lattice $\mathbf{Z}^2$ and the sample space $\Omega := \{-1,+1\}^{\mathbf{Z}^2}$
of spin configurations on $\mathbf{Z}^2$. 
The spin value at $x \in \mathbf{Z}^2$ in
the configuration $\omega \in \Omega$ is denoted by $\omega (x)$.
Let $|x|$ denote the $\ell^1$-norm of $x \in \mathbf{Z}^2$:
\[ |x| := |x^1| + |x^2| \mbox{ for } x= (x^1,x^2) \in \mathbf{Z}^2. \]
For any finite $V$, we define the Hamiltonian $H_{V,h}^{\omega}: \Omega_V =  \{ -1,+1\}^{V} \to \mathbf{R}$ by
\[ H_{V,h}^{\omega} (\sigma) = -\frac{1}{2} \sum_{x,y \in V,\,|x-y|=1} \sigma(x)\sigma(y) - \sum_{x \in V} \left(h + \sum_{y \notin V,\,|x-y|=1} \omega(y) \right) \sigma (x), \]
for $\sigma \in \Omega_V$. Here $h$ is a real number called the {\it external field}.
We then define the finite Gibbs measure on $\Omega_V$ by
\[ q_{V,\beta,h}^{\omega} (\sigma) = \left[ \sum_{\sigma' \in \Omega_V} \exp \{ -\beta H_{V,h}^{\omega} (\sigma') \} \right]^{-1} \exp \{ -\beta H_{V,h}^{\omega} (\sigma) \}. \]
Here $\beta$ is a positive number called the {\it inverse temperature}. For any set $V \subset \mathbf{Z}^2$, we denote by $\mathcal{F}_V$ the $\sigma$-algebra generated by $\{ \omega(x) : x \in V \}$.
For each $\beta > 0$ and $h \in \mathbf{R}$, the Gibbs measure is a probability measure $\mu_{\beta,h} $ on $\Omega$ in the sense of the following DLR equation:
\[ \mu_{\beta,h} (\;\cdot\,|\,\mathcal{F}_{V^c})(\omega) = q_{V,\beta,h}^{\omega} (\;\cdot\;) \quad \mbox{$\mu_{\beta,h}$-almost every $\omega$}, \]
where $V^c = \mathbf{Z}^2 \setminus V$. Let $\beta_c$ be the critical value such that if $\beta < \beta_c$ or $h \neq 0$, the Gibbs measure is unique for $(\beta, h)$.

In this paper, we consider the percolation problem in the Ising
model in the high-temperature regime, and construct the incipient infinite cluster (IIC) measure in the sense of Kesten \cite{K86i}.
A sequence $\{x_1, x_2, \ldots , x_n\}$ of points
in $\mathbf{Z}^2$ is called a {\it path} if $|x_i - x_{i+1}| = 1$ for $ 1 \leq i \leq n - 1$.
A path is called a $(+)$-{\it path} if the spin value is $+$ for every point of this path.
A $(+)$-{\it cluster} is the set of vertices connected by $(+)$-paths.
Let ${\bf C}_0^+$ be the $(+)$-cluster that contains the origin, and
$\# {\bf C}_0^+$ denotes the number of vertices in ${\bf C}_0^+$.
Let $S(n)$ be the square $[-n,n]^2$, and $S^c(n):=(S(n))^c$.

For $\beta >0$, we define $h_c(\beta)$ by
\[ h_c(\beta) := \inf \{ h: \mu_{\beta, h}( \#{\bf C}_0^+=\infty)>0\}. \]
It has been proved (see \cite{Hig93a}) that if $\beta < \beta_c$,
then $h_c(\beta)> 0$. 
We write $v \rightsquigarrow w$ if there exists a $(+)$-path from $v$ to $w$.
Similarly $v \rightsquigarrow B$ (resp. $A\rightsquigarrow B$) means that
$v \rightsquigarrow w$ for some $w \in B$ (resp. for some $ v \in A$ and $w \in B$).

Our main result is the following:

\begin{theorem} \label{thm:1.1}
Let $\beta < \beta_c$. For every cylinder event $E$, the limits
\begin{equation} \label{eq:thm1.1(1)}
\lim_{n \to \infty} \mu_{\beta,h_c(\beta)} \bigl(E\, \bigl|\, \mathbf{O} \rightsquigarrow S^c(n)  \bigr)
\end{equation}
and
\begin{equation} \label{eq:thm1.1(2)}
\lim_{h \searrow h_c(\beta)} \mu_{\beta,h} \bigl(E\, \left|\, \# {\bf C}_0^+=\infty \right. \bigr)
\end{equation}
exist and are equal. If we denote their common value by $\nu(E)$, then $\nu$ extends
uniquely to a probability measure on $\Omega$, and
\begin{equation} \label{eq:thm1.1(3)}
\nu \left(\begin{array}{@{\,}c@{\,}}
\mbox{there exists exactly one infinite $(+)$-cluster $\tilde{\bf C}_0^+$,} \\
\mbox{and $\tilde{\bf C}_0^+$ contains the origin $\mathbf{O}$} \\
\end{array} \right)
= 1.
\end{equation}
\end{theorem}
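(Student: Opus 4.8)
The plan is to adapt Kesten's construction \cite{K86i}, replacing the independence used for Bernoulli percolation by two structural features of the high-temperature Ising model. First, $\mu_{\beta,h}$ is a Markov field, so conditionally on the spins in a box the outside depends only on the adjacent boundary layer. Second, because $\beta<\beta_c$ keeps the model in its uniqueness regime for every $h$, including $h=h_c(\beta)$, the measure enjoys the ratio weak-mixing property. The auxiliary inputs are the FKG inequality, finite energy (each single-site conditional probability is bounded away from $0$ and $1$), and RSW-type crossing bounds at $(\beta,h_c(\beta))$, which yield quasi-multiplicativity of one-arm probabilities and separation of connections; such crossing estimates are available in the present regime.

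For the limit \eqref{eq:thm1.1(1)} fix a cylinder event $E$ supported in $S(m)$ and take $N>m$. As $E$ is $\mathcal{F}_{S(N)}$-measurable it suffices to prove that $\mu_{\beta,h_c(\beta)}(\omega|_{S(N)}=\eta\mid \mathbf{O}\rightsquigarrow S^c(n))$ converges as $n\to\infty$ for each fixed inner configuration $\eta$. By the Markov property, on $\{\omega|_{S(N)}=\eta\}$ the continuation to $S^c(n)$ is governed only by the boundary layer of $\eta$ and by the set $\partial_\eta^{+}$ of $(+)$-vertices on $\partial S(N)$ joined to $\mathbf{O}$ inside $S(N)$, so that
\[ \mu_{\beta,h_c(\beta)}(\omega|_{S(N)}=\eta\mid \mathbf{O}\rightsquigarrow S^c(n)) = \frac{\mu_{\beta,h_c(\beta)}(\omega|_{S(N)}=\eta)\,\pi_\eta(N,n)}{\sum_{\eta'}\mu_{\beta,h_c(\beta)}(\omega|_{S(N)}=\eta')\,\pi_{\eta'}(N,n)}, \]
where $\pi_\eta(N,n)$ denotes the probability that $\partial_\eta^{+}$ reaches $S^c(n)$ through the annulus under the boundary condition read off from $\eta$. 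Convergence thus reduces to showing that the ratios $\pi_\eta(N,n)/\pi_{\eta'}(N,n)$ converge as $n\to\infty$, and this is the main obstacle. I would prove that they are Cauchy by a decoupling argument: conditioning on the set of points of $\partial S(n)$ reached from $\partial_\eta^{+}$, the further extension to a larger scale $n'$ is controlled, through ratio weak mixing, by the configuration in the far annulus $S(n')\setminus S(n)$ alone, hence is asymptotically insensitive to whether the connection started from $\eta$ or $\eta'$; RSW and quasi-multiplicativity keep the relevant probabilities comparable and make the reached set regular enough for the mixing estimate, yielding $\pi_\eta(N,n')/\pi_{\eta'}(N,n')=(1+o(1))\,\pi_\eta(N,n)/\pi_{\eta'}(N,n)$.

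For the limit \eqref{eq:thm1.1(2)} I would use the percolation correlation length $L(h)$, which diverges as $h\searrow h_c(\beta)$, together with the finite-size picture that on scales below $L(h)$ the supercritical measure looks critical. Stochastic monotonicity of $\mu_{\beta,h}$ in $h$ and FKG allow one to compare $\mu_{\beta,h}(E\mid \#\mathbf{C}_0^+=\infty)$ with $\mu_{\beta,h}(E\mid \mathbf{O}\rightsquigarrow S^c(L(h)))$, while continuity of finite-dimensional conditional probabilities in $h$ makes the latter close to $\mu_{\beta,h_c(\beta)}(E\mid \mathbf{O}\rightsquigarrow S^c(L(h)))$. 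Letting $h\searrow h_c(\beta)$ forces $L(h)\to\infty$ and reduces \eqref{eq:thm1.1(2)} to the already constructed limit \eqref{eq:thm1.1(1)}, which at the same time proves the equality of the two limits. Controlling these two comparisons simultaneously is the second delicate point.

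The common value $\nu$ is, by construction, a consistent finitely additive probability on cylinder events, so it extends uniquely to a probability measure on $\Omega$ by Kolmogorov's theorem. Since $\{\mathbf{O}\rightsquigarrow S^c(N)\}\subseteq\{\mathbf{O}\rightsquigarrow S^c(n)\}$ whenever $N\geq n$, every approximating conditional measure already assigns full weight to $\{\mathbf{O}\rightsquigarrow S^c(n)\}$, and approximating this increasing event by cylinders gives $\nu(\mathbf{O}\rightsquigarrow S^c(n))=1$ for all $n$; hence the origin lies in an infinite $(+)$-cluster $\nu$-almost surely. Finally, $\mu_{\beta,h}$ is translation invariant with finite energy, so by the Burton--Keane argument it has at most one infinite $(+)$-cluster almost surely for every $h>h_c(\beta)$; transferring this through the representation \eqref{eq:thm1.1(2)} of $\nu$, and using RSW to exclude two disjoint macroscopic arms in the representation \eqref{eq:thm1.1(1)}, yields that under $\nu$ there is exactly one infinite $(+)$-cluster, necessarily the one containing $\mathbf{O}$, which is \eqref{eq:thm1.1(3)}.
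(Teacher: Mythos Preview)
Your high-level plan (adapt Kesten via Markov property, FKG and mixing) matches the paper, but the central step of your argument has a genuine gap. You propose to show convergence of $\pi_\eta(N,n)/\pi_{\eta'}(N,n)$ by ``conditioning on the set of points of $\partial S(n)$ reached from $\partial_\eta^{+}$'' and then invoking ratio weak mixing to decouple the continuation to scale $n'$. But knowing which vertices of $\partial S(n)$ are reached does \emph{not} give a Markov boundary condition: the law of the configuration outside $S(n)$ still depends on the full spin pattern on the boundary layer, and that pattern is correlated with the entire inner history, hence with $\eta$ versus $\eta'$. Ratio weak mixing compares two \emph{given} boundary conditions at distance; it does not erase the dependence of the (random) boundary pattern on the interior event you have conditioned on. So as written there is no regeneration, and the Cauchy estimate does not follow.

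The paper fixes exactly this point by keeping Kesten's $(+)$-circuit decomposition: one conditions on the innermost $(+)$-circuit $\mathcal{C}$ in an annulus, which forces the boundary to be all $+1$ along $\mathcal{C}$ and makes $\gamma(\mathcal{C},n)=\mu_{\beta,h}(\mathcal{C}\rightsquigarrow S^c(n)\mid[\mathcal{C}]_+)$ a genuine Markov continuation. The ratio $\gamma(\mathcal{C}',n)/\gamma(\mathcal{C}'',n)$ is then controlled through a transition kernel $M(\mathcal{C},\mathcal{D},j)=\mu_{\beta,h}(F_j(\mathcal{D})\cap\{\mathcal{C}\rightsquigarrow\mathcal{D}\}\mid[\mathcal{C}]_+)$, and the key technical step is a uniform cross-ratio bound on $M$. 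Proving that bound is where independence fails and where the paper uses not generic ratio mixing but the \emph{fine} mixing estimate of Theorem~\ref{thm:2.2}: one only needs the two boundary conditions to agree on being $+1$ near the relevant set, which is guaranteed because the inner circuit is $(+)$. Your sketch does not isolate this tool, and your ``reached set'' conditioning cannot play the role of the $(+)$-circuit.

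A second difference: the paper does not pass through a correlation length $L(h)$ for \eqref{eq:thm1.1(2)}. All the circuit/kernel estimates are proved uniformly in $h\geq h_c(\beta)$, so both limits \eqref{eq:thm1.1(1)} and \eqref{eq:thm1.1(2)} come out of the same machinery at once; this avoids the ``second delicate point'' you flag. Your route via $L(h)$ and continuity in $h$ is plausible but would require additional near-critical input that the paper sidesteps.
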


Kesten's proof \cite{K86i} uses independence in many places; we have to replace it with the mixing property (Theorem \ref{thm:2.2} below). Still the argument is relatively simple.

The expectation with respect to $\nu$ is denoted by $E_{\nu}$. The following theorem is obtained by a similar method as \cite{HTZ12}, Lemma 2.3 (see also \cite{HTZ10}). 

\begin{theorem}[cf. \cite{K86i} (8)] For any $t \geqq 1$, 
\[ E_{\nu} \left[ \{ \# (\tilde{\bf C}_0^+ \cap S(n)) \}^t \right] \asymp \{ n^2 \mu_{\beta,h_c(\beta)} (\mathbf{O} \rightsquigarrow S^c(n)) \}^t, \]
where $f(n) \asymp g(n)$ means that $C_1 g(n) \leq f(n) \leq C_2 g(n)$.
\end{theorem}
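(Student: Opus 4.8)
The plan is to write $X_n := \#(\tilde{\bf C}_0^+ \cap S(n))$, to reduce the $t$-th moment to a sum of multi-point connection probabilities, and to estimate these from above and below. Throughout set $\pi(n) := \mu_{\beta,h_c(\beta)}(\mathbf{O} \rightsquigarrow S^c(n))$ and write $\tau(x,y) := \mu_{\beta,h_c(\beta)}(x \rightsquigarrow y)$ for the two-point function. By \eqref{eq:thm1.1(3)}, $\nu$-almost surely $\tilde{\bf C}_0^+$ is the unique infinite $(+)$-cluster and contains $\mathbf{O}$, so $x \in \tilde{\bf C}_0^+$ if and only if $\mathbf{O} \rightsquigarrow x$; hence $X_n = \sum_{x \in S(n)} \mathbf{1}\{\mathbf{O} \rightsquigarrow x\}$ and
\[ E_\nu[X_n^t] = \sum_{x_1,\dots,x_t \in S(n)} \nu\bigl(\mathbf{O} \rightsquigarrow x_1,\dots,\mathbf{O} \rightsquigarrow x_t\bigr). \]
Using Theorem~\ref{thm:1.1} to rewrite $\nu$ as the limit of the normalized measures, each summand equals
\[ \lim_{N\to\infty} \frac{\mu_{\beta,h_c(\beta)}\bigl(\mathbf{O}\rightsquigarrow x_1,\dots,\mathbf{O}\rightsquigarrow x_t,\ \mathbf{O}\rightsquigarrow S^c(N)\bigr)}{\pi(N)}, \]
and since the outer sum is finite I may pass the limit outside it. Strictly speaking $\{\mathbf{O}\rightsquigarrow x_i\}$ is not a cylinder event, so I first replace it by its restriction to a large box $S(M)$, apply Theorem~\ref{thm:1.1}, and then let $M\to\infty$, the correction being negligible by the arm estimates; I suppress this routine approximation below.

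For the lower bound I would first prove the case $t=1$, namely $E_\nu[X_n] \geq C_1 n^2 \pi(n)$. This is where most of the positive-direction work sits: using FKG together with RSW-type box-crossing estimates, available in the high-temperature regime as in \cite{HTZ10,HTZ12}, one constructs with probability of the correct order a connection from $\mathbf{O}$ to a positive fraction of the vertices $x$ with $|x| \asymp n$ that is compatible with the conditioning $\mathbf{O}\rightsquigarrow S^c(N)$; summing over such $x$ gives the bound. Since $u \mapsto u^t$ is convex for $t \geq 1$, Jensen's inequality then yields $E_\nu[X_n^t] \geq (E_\nu[X_n])^t \geq C_1^t\,\{n^2\pi(n)\}^t$, which is the required lower bound.

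The upper bound is the main obstacle. Kesten's original argument relies on the van den Berg--Kesten and tree-graph inequalities, which are not directly available under $\mu_{\beta,h_c(\beta)}$ owing to the dependence; I would replace them by the mixing property (Theorem~\ref{thm:2.2}) together with FKG, which yields a BK-type factorization up to multiplicative constants whenever the relevant connections take place in well-separated regions. Concretely, the event $\{\mathbf{O}\rightsquigarrow x_1,\dots,\mathbf{O}\rightsquigarrow x_t,\ \mathbf{O}\rightsquigarrow S^c(N)\}$ forces the cluster of $\mathbf{O}$ to meet each $x_i$ and to send one arm across the annulus $S(N)\setminus S(n)$; decomposing along a branching (tree) structure on the marked points $\{\mathbf{O},x_1,\dots,x_t\}$ and the site at which the long arm leaves $S(n)$, the mixing bound gives
\[ \mu_{\beta,h_c(\beta)}\bigl(\mathbf{O}\rightsquigarrow x_1,\dots,\mathbf{O}\rightsquigarrow x_t,\ \mathbf{O}\rightsquigarrow S^c(N)\bigr) \leq C\,\pi(n,N)\sum_{T}\prod_{(u,v)\in T}\tau(u,v), \]
where $\pi(n,N)$ is the probability of a $(+)$-crossing of the annulus $S(N)\setminus S(n)$ and $T$ ranges over the admissible tree shapes. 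Dividing by $\pi(N)$ and invoking quasi-multiplicativity $\pi(N)\asymp\pi(n)\,\pi(n,N)$ turns the annulus factor into $C/\pi(n)$. It then remains to sum over $x_1,\dots,x_t\in S(n)$: summing each branch that ends at a point $x_i$ first, and using the one-point estimate $\sum_{x\in S(n)}\tau(w,x)\asymp n^2\pi(n)$ (uniform in $w\in S(n)$), one extracts a factor $n^2\pi(n)$ for each of the $t$ points, while the remaining backbone factors telescope against the arm probabilities through quasi-multiplicativity, leaving exactly the compensating factor $\pi(n)$; this produces $E_\nu[X_n^t]\leq C_2\{n^2\pi(n)\}^t$.

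The step I expect to be hardest is thus twofold: establishing the BK-type upper bound for $t$-fold connections under the dependent measure $\mu_{\beta,h_c(\beta)}$ from the mixing property and FKG alone, and carrying out the tree-sum bookkeeping \emph{uniformly in} $N$, so that the passage $N\to\infty$ is legitimate and the quasi-multiplicative cancellations are exact up to constants. Both ingredients are supplied by the same method as \cite{HTZ12}, Lemma~2.3 (see also \cite{HTZ10}), and combining them with the lower bound above gives $E_\nu[X_n^t]\asymp\{n^2\pi(n)\}^t$.
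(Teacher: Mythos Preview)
The paper does not actually prove this theorem: it simply asserts that the result ``is obtained by a similar method as \cite{HTZ12}, Lemma~2.3 (see also \cite{HTZ10}),'' and gives no further details. Your proposal is a reasonable expansion of exactly that reference: you reduce to multi-point connection probabilities under the conditioned measure, handle the lower bound via the $t=1$ case plus Jensen, and handle the upper bound by a tree-graph style decomposition with quasi-multiplicativity of the one-arm probability, replacing independence/BK by FKG together with the mixing property. Since you and the paper both defer the substantive estimates to \cite{HTZ12,HTZ10}, there is nothing to compare at the level of strategy.

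One caution on your write-up of the upper bound: the phrase ``mixing yields a BK-type factorization up to multiplicative constants'' is doing a lot of work and is not quite how the argument in \cite{HTZ12} actually runs. There is no BK inequality for the Ising measure, and mixing alone does not produce one; what \cite{HTZ12} does is condition on suitable innermost circuits/arms and use the Markov property plus FKG plus mixing to decouple the pieces, which is closer to an ``extension of arms'' argument than to a disjoint-occurrence bound. Your tree-sum bookkeeping and the quasi-multiplicativity cancellation are the right picture, but when you fill in details you should follow the conditional-decoupling route of \cite{HTZ12} rather than try to manufacture a BK substitute directly.
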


Our proof works also for Ising percolation on the triangular lattice with $\beta<\beta_c$ and $h=h_c(\beta)=0$. (See \cite{BCM10} for a related work.) In view of universality, we believe that $\mu_{\beta, h_c(\beta)}(\mathbf{O} \rightsquigarrow S^c(n)) \approx n^{-5/48}$ as in the critical percolation on the triangular lattice. 

\section{Notation and preliminary results}

A path $\{x_1, x_2, \ldots , x_n\}$ is called {\it self-avoiding} if $x_i \neq  x_j$ for $i \neq  j$.
It is called a {\it circuit} if $\{x_1, x_2, \ldots , x_{n-1}\}$ is self-avoiding and $x_n = x_1$.
For a given circuit $\mathcal{C}$, and a finite set $D\subset \mathbb{Z}^2$,
we say that $\mathcal{C}$ surrounds $D$ if $\mathcal{C}\subset D^c$ and
any path connecting $D$ with $\infty$ intersects $\mathcal{C}$.

For $V_1,\,V_2 \subset \mathbf{Z}^2$, $d(V_1,V_2)$ denotes the $\ell^1$-distance between $V_1$ and $V_2$; that is,
\[ d(V_1,V_2) = \inf\{ |x-y| : x \in V_1,\,y \in V_2 \}. \]

The following is a refinement of Theorem 2 (ii) of \cite{Hig93a}, which can be obtained without changing the original proof. Although it is stated for the square lattice, it is also valid for the triangular lattice, together with the exponential decay result in \cite{ABF87}.

\begin{theorem}[The mixing property] \label{thm:2.2}
Let $\beta <\beta_c$, and $h\geq h_c(\beta )$.
There exist constants $C>0$ and $\alpha >0$ such that the following holds.
Let $l$ be a given integer. Assume that
$V\subset \Lambda $ are finite subsets of ${\mathbf Z}^2$,
$A \in {\mathcal F}_V$ and $\omega_1, \omega_2 \in \Omega $
satisfy 
\[ \omega_1(x) = \omega _2(x)=+1 \]
for every $x\in \partial\Lambda $ with $d(x,V)<\ell $.
Then we have
\begin{equation}\label{eq:fine mixing}
| q_{\Lambda ,\beta ,h}^{\omega_1}(A)- 
     q_{\Lambda ,\beta ,h}^{\omega_2}(A)|
     \leq C \sum_{x\in V}
         \sum_{y\in \partial \Lambda, d(y,V)\geq \ell }
           e^{-\alpha |x-y|}.
\end{equation}
\end{theorem}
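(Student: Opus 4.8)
The plan is to bound the left-hand side of \eqref{eq:fine mixing} uniformly over $A\in\mathcal F_V$; since the right-hand side does not depend on $A$, it suffices to control the total variation distance between the $V$-marginals of $q_{\Lambda,\beta,h}^{\omega_1}$ and $q_{\Lambda,\beta,h}^{\omega_2}$, and to reduce the effect of the distant boundary to a sum of single-spin influences. First I would telescope over the sites where $\omega_1$ and $\omega_2$ disagree. By hypothesis every such site lies in $\partial\Lambda$ and has distance $\geq\ell$ from $V$; enumerate them as $y_1,\dots,y_m$. Setting $\eta_0=\omega_1$ and letting $\eta_k$ coincide with $\eta_{k-1}$ except that $\eta_k(y_k)=\omega_2(y_k)$, so $\eta_m=\omega_2$, the triangle inequality gives $|q^{\omega_1}(A)-q^{\omega_2}(A)|\leq\sum_{k=1}^m|q^{\eta_k}(A)-q^{\eta_{k-1}}(A)|$, where $\eta_k$ and $\eta_{k-1}$ differ only at the single boundary site $y_k$.

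For each single flip I would use a monotone (FKG) coupling. Flipping $\omega(y_k)$ changes only the external field felt by the unique neighbour $x_k'\in\Lambda$ of $y_k$ (by $\pm2$), so $q^{\eta_k}$ and $q^{\eta_{k-1}}$ are FKG-ordered, and there is a coupling $(\sigma^+,\sigma^-)$ with $\sigma^+\geq\sigma^-$ pointwise whose marginals are the two measures, labelled so that $\sigma^+$ is the larger. Writing $\langle\cdot\rangle^\pm$ for the corresponding expectations and using $A\in\mathcal F_V$,
\[ |q^{\eta_k}(A)-q^{\eta_{k-1}}(A)| \leq \mathbf P\bigl(\sigma^+|_V \neq \sigma^-|_V\bigr) \leq \sum_{x\in V}\mathbf P\bigl(\sigma^+(x)\neq\sigma^-(x)\bigr) = \frac12\sum_{x\in V}\bigl(\langle\sigma(x)\rangle^+ - \langle\sigma(x)\rangle^-\bigr), \]
the last equality because a disagreement at $x$ forces $\sigma^+(x)-\sigma^-(x)=2$ under the monotone coupling.

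The heart of the matter is the single-spin influence bound $\langle\sigma(x)\rangle^+ - \langle\sigma(x)\rangle^-\leq C\,e^{-\alpha|x-y_k|}$ for every $x\in V$, uniform in $\Lambda$ and in the boundary condition. Regarding the flip as raising the field at $x_k'$ and differentiating, this difference equals $\beta$ times the integral of the truncated two-point function $\langle\sigma(x);\sigma(x_k')\rangle$ over the interpolating fields; since $|x_k'-y_k|=1$, an exponential bound in $|x-x_k'|$ is the same as one in $|x-y_k|$ up to a constant factor. Thus the estimate is exactly the exponential decay of truncated correlations in the high-temperature regime, which holds for all $h$ when $\beta<\beta_c$ (and on the triangular lattice at $h=0$ by \cite{ABF87}). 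This is the only genuinely analytic ingredient, and it is precisely what underlies Theorem 2 (ii) of \cite{Hig93a}; the point of the present refinement is that the coupling argument produces the influence as the clean double sum over $(x,y)$ rather than a cruder bound, which is why ``the original proof'' needs no change.

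Finally I would insert the single-spin bound into the telescoping sum and enlarge the index set: using that $\{y_1,\dots,y_m\}\subset\{y\in\partial\Lambda:d(y,V)\geq\ell\}$ and that all summands are nonnegative,
\[ \sum_{k=1}^m\sum_{x\in V} e^{-\alpha|x-y_k|} \leq \sum_{x\in V}\sum_{y\in\partial\Lambda,\,d(y,V)\geq\ell} e^{-\alpha|x-y|}, \]
which is the right-hand side of \eqref{eq:fine mixing} up to the constant $C$. The main obstacle is the uniform exponential-decay estimate for the single-spin influence; granting it, the remaining monotone-coupling and telescoping steps are soft.
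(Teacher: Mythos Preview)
The paper does not actually prove this theorem: it is stated as a refinement of Theorem~2~(ii) of \cite{Hig93a}, with the sole remark that it ``can be obtained without changing the original proof.'' So there is no proof in the paper to compare against. Your sketch is a correct outline of the standard argument and is fully consistent with that remark---telescope over the boundary sites where $\omega_1$ and $\omega_2$ disagree, use the FKG monotone coupling to reduce each single-flip contribution to a sum of magnetization differences over $x\in V$, and bound each such difference by the exponentially decaying truncated two-point function for $\beta<\beta_c$. You also put your finger on why the refinement costs nothing extra: the coupling estimate naturally yields the double sum $\sum_{x\in V}\sum_{y}e^{-\alpha|x-y|}$ before one discards any structure.

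Two minor points. First, a site $y_k\in\partial\Lambda$ may have several neighbours in $\Lambda$, not a unique one; this only affects the constant. Second, and more substantively, the bound $\langle\sigma(x);\sigma(x_k')\rangle_\Lambda^{\eta}\leq Ce^{-\alpha|x-x_k'|}$ must hold \emph{uniformly} in the finite volume $\Lambda$ and in the (arbitrary) boundary condition $\eta$ appearing along the telescoping path, not merely in infinite volume. You flag this as the one genuine analytic input, which is right; this uniformity is precisely what the argument of \cite{Hig93a} (together with \cite{ABF87}) establishes, and is the content behind the authors' phrase ``without changing the original proof.''
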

In particular, 
for every pair of finite subsets $V$ and $W$ of $\mathbf{Z}^2$ with $V \subset W$, 
\begin{align} \label{HZ(1.1)}
&\sup_{\omega \in \Omega,\,A \in \mathcal{F}_V} \left| \mu_{\beta,h}(A) - \mu_{\beta,h}(A\,|\,\mathcal{F}_{W^c} )(\omega)\right| \\
&\leq C|V| d(V,W^c)\exp\{-\alpha d(V,W^c)\}. \notag
\end{align}
Hereafter we fix $\beta < \beta_c$.

\begin{lemma}[\cite{Hig93b}] \label{lemma:3.3}
For any integer $k > 0$, there exists a constant $\delta_k>0$ such that for all $h \geq h_c(\beta)$ and for all $n$,
\begin{align}
&\mu_{\beta,h} \bigl( \mbox{there exists a horizontal $(+)$-crossing of $[0,kn] \times [0,n]$} \bigr)\geq \delta_k, \label{eq:(3.17)} \\
&\mu_{\beta,h} \bigl( \mbox{there exists a vertical $(+)$-crossing of $[0,n] \times [0,kn]$} \bigr) \geq \delta_k. \label{eq:(3.18)}
\end{align}
\end{lemma}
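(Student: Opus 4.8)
The statement is a Russo--Seymour--Welsh (RSW) type crossing estimate, which I would prove in two stages: first a lower bound on the $(+)$-crossing probability of a \emph{square}, uniform in $n$ and in $h \geq h_c(\beta)$, and then an RSW gluing argument that promotes square crossings to crossings of rectangles of arbitrary fixed aspect ratio $k$. Since $\mathbf{Z}^2$ and $\mu_{\beta,h}$ are invariant under the $90^\circ$ rotation, \eqref{eq:(3.18)} follows from \eqref{eq:(3.17)} by symmetry, so it suffices to treat horizontal crossings.

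For the square bound I would first invoke stochastic monotonicity: a horizontal $(+)$-crossing is an increasing event, and $\mu_{\beta,h}$ is stochastically nondecreasing in $h$ by Holley's monotonicity for the ferromagnetic Ising model. The crossing probability is thus nondecreasing in $h$, so its infimum over $h \geq h_c(\beta)$ is controlled by its value at $h = h_c(\beta)$, and it suffices to bound the $(+)$-crossing probability of $[0,n]^2$ from below at the critical field, uniformly in $n$. Here the planar matching structure enters: a horizontal $(+)$-crossing of $[0,n]^2$ is absent precisely when there is a vertical crossing by a $*$-connected path of $-$ spins (using also diagonal neighbours). Because the crossing probability is, by the mixing property (Theorem \ref{thm:2.2}), essentially insensitive to the boundary condition, one may pass freely between $\mu_{\beta,h}$ and finite-volume measures with monotone boundary conditions; the characterization of $h_c(\beta)$ from \cite{Hig93a} as the critical field for $(+)$-percolation then guarantees that at $h = h_c(\beta)$ neither the $(+)$- nor the $(-)^*$-crossing probability can degenerate, and a square-root-trick argument applied to the two crossing directions pins the square-crossing probability away from $0$ uniformly in $n$.

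The gluing stage is comparatively routine and rests on the FKG inequality, which holds for $\mu_{\beta,h}$ since $\beta > 0$. Given that squares are crossed horizontally with probability at least some $c_0 > 0$, one builds a horizontal crossing of a rectangle of aspect ratio $\tfrac{3}{2}$ by overlapping two square crossings and joining them through a vertical crossing in the overlap region; each of these is an increasing event, so FKG bounds the probability of their intersection from below by the product of the individual probabilities. Iterating this construction a number of times depending only on $k$ concatenates $O(k)$ overlapping sub-crossings into a horizontal crossing of $[0,kn]\times[0,n]$, and FKG again controls the intersection, yielding $\delta_k > 0$ uniformly in $n$ because the number of gluing steps depends on $k$ alone. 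The mixing property is used throughout to ensure that the comparisons performed in finite volume transfer to the infinite-volume measure and, later, to the conditioned measures.

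The main obstacle is the uniform square-crossing lower bound at the critical field. Unlike Bernoulli percolation, $\mu_{\beta,h}$ is \emph{not} invariant under the exchange $+\leftrightarrow-$ when $h_c(\beta) > 0$ (that exchange maps $\mu_{\beta,h}$ to $\mu_{\beta,-h}$), so one cannot equate the $(+)$- and $(-)^*$-crossing probabilities by self-duality as in the self-dual case $p=\tfrac12$. Making the matching argument effective therefore requires the delicate criticality input from \cite{Hig93a} together with the mixing property to decouple the two crossing events across the box and to rule out the crossing probability tending to $0$; once this square estimate is in hand, the FKG-based RSW construction of the preceding paragraph completes the proof.
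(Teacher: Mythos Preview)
The paper does not prove this lemma at all: it is quoted as a known result from \cite{Hig93b} and no argument is given in the text. So there is no in-paper proof to compare your proposal against.

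Your outline is a fair high-level summary of the strategy behind \cite{Hig93b}: monotonicity in $h$ reduces to $h=h_c(\beta)$, a uniform square-crossing lower bound at criticality is the heart of the matter, and then FKG-based RSW gluing upgrades squares to $k{:}1$ rectangles. You also correctly flag the genuine obstacle, namely that for $h_c(\beta)>0$ there is no $+\leftrightarrow-$ self-duality, so the matching argument and the square-root trick by themselves do not pin the square-crossing probability away from~$0$. Where your sketch is thin is exactly there: saying that ``the characterization of $h_c(\beta)$ \ldots\ guarantees that neither crossing probability can degenerate'' and invoking ``delicate criticality input from \cite{Hig93a}'' is a placeholder rather than an argument---this step is in fact the main content of \cite{Hig93b} and requires a good deal more than the mixing property plus matching. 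Since the present paper is content to cite the result, your sketch is adequate as an explanation of \emph{why} one believes the lemma, but it should not be read as an independent proof.
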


\begin{lemma} \label{lem:one-arm} 
There exists a positive constant $C_1$ such that for $R<n$,
\[ \mu_{\beta,h_c(\beta)}  \bigl( S(R) \rightsquigarrow S^c(n) \bigr) \geq C_1\dfrac{R}{n}. \]
\end{lemma}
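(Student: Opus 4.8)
The plan is to extract the linear lower bound from the Russo--Seymour--Welsh crossing estimate of Lemma~\ref{lemma:3.3} together with the translation invariance of $\mu_{\beta,h_c(\beta)}$, by a pigeonhole (``spreading'') argument controlling where a long crossing meets a central line. Since $\beta<\beta_c$ and $h_c(\beta)>0$ (so $h\neq 0$), the Gibbs measure $\mu_{\beta,h_c(\beta)}$ is unique, hence translation invariant; this is the only structural input beyond crossing bounds, and in particular the mixing property is not needed here. I would work in the doubled box $S(2n)=[-2n,2n]^2$ and let $A$ be the event that there is a horizontal $(+)$-crossing of $S(2n)$, i.e.\ a $(+)$-path joining its left edge $\{-2n\}\times[-2n,2n]$ to its right edge $\{2n\}\times[-2n,2n]$. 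As this is a crossing of a square, Lemma~\ref{lemma:3.3} (with $k=1$, applied after a translation) gives $\mu_{\beta,h_c(\beta)}(A)\geq \delta_1=:\delta>0$, uniformly in $n$.

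Next I would partition the central segment $\{0\}\times[-2n,2n]$ into $O(n/R)$ consecutive subsegments $J_i$ of length $R$, and let $\tilde A_i\subseteq A$ be the event that some horizontal $(+)$-crossing of $S(2n)$ passes through a vertex lying in $J_i$. Every horizontal crossing has an endpoint at $x=-2n$ and one at $x=2n$, so some vertex along it has first coordinate $0$; that vertex lies in the central segment, hence in some $J_i$. Therefore $A\subseteq\bigcup_i \tilde A_i$, and by subadditivity and pigeonhole there is an index $i^{*}$ with $\mu_{\beta,h_c(\beta)}(\tilde A_{i^{*}})\geq \delta/(\#\{i\})\geq cR/n$, where $c>0$ depends only on $\delta$ (using $R<n$ to bound the number of intervals by $O(n/R)$).

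Finally I would transport $\tilde A_{i^{*}}$ to the origin. On $\tilde A_{i^{*}}$ there is a vertex $v\in J_{i^{*}}$ with $v\rightsquigarrow\{x=2n\}$. Let $\tau$ be the vertical translation sending the center of $J_{i^{*}}$ to the origin; by translation invariance $\mu_{\beta,h_c(\beta)}(\tau\tilde A_{i^{*}})=\mu_{\beta,h_c(\beta)}(\tilde A_{i^{*}})$. The box $\tau S(2n)$ still has its right edge on the line $x=2n$, and $\tau v\in\{0\}\times[-R/2,R/2]\subseteq S(R)$; moreover the $(+)$-path from $\tau v$ to $\{x=2n\}$ must pass through a vertex with first coordinate exceeding $n$, hence reaches $S^{c}(n)$ (here $2n>n$). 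Thus $\tau\tilde A_{i^{*}}\subseteq\{S(R)\rightsquigarrow S^{c}(n)\}$, giving $\mu_{\beta,h_c(\beta)}(S(R)\rightsquigarrow S^{c}(n))\geq cR/n$, which is the claim with $C_1=c$. The argument is elementary once RSW and translation invariance are available; the one point requiring care — and the reason I work in $S(2n)$ rather than $S(n)$ — is the bookkeeping ensuring that the reached edge genuinely lies in $S^{c}(n)$ after the translation, all constant-factor discrepancies between $R$, $R/2$, $n$ and $2n$ being absorbed into $C_1$.
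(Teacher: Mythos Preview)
Your argument is correct and is precisely the standard spreading/pigeonhole argument that the paper points to by citing \cite{Z95}, Lemma~5: use an RSW crossing of a square of side $\sim n$, locate where it meets the central line, pigeonhole over $O(n/R)$ subsegments, and translate. One small remark: for $\beta<\beta_c$ the Gibbs measure is already unique (hence translation invariant) regardless of $h$, so you do not even need $h_c(\beta)\neq 0$ for that step.
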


For the idea of the proof of Lemma \ref{lem:one-arm}, see e.g. \cite{Z95}, Lemma 5.

\section{Proof of Theorem \ref{thm:1.1}}
The proof of Theorem \ref{thm:1.1} goes parallel to 
Kesten's original proof \cite{K86i}, except one point 
in the proof of Lemma \ref{lem:3.2} below,
where the independence played an important role.
This can be overcome by means of the fine part of
the mixing property (\ref{eq:fine mixing}).

\begin{lemma}[cf. \cite{K86i} p.374] We can find $1 \leq k(1) < k(2) < \cdots$ such that
\[ \alpha_i := \mu_{\beta,h_c(\beta)}\left( \begin{array}{@{\,}c@{\,}}
\mbox{there exists a $(+)$-circuit surrounding $ S(3^{k(i)})$}\\
\mbox{in the annulus $S(3^{k({i+1})}) \setminus  S(2\cdot 3^{k(i)})$}
\end{array}
\right) \]
tends to $1$ as $i \to \infty$.
\end{lemma}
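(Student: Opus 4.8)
The plan is to choose the sequence $\{k(i)\}$ recursively, the point being that the probability of a surrounding $(+)$-circuit in a thick annulus can be pushed arbitrarily close to $1$ as soon as the annulus contains enough well-separated scales. For a fixed inner scale $k(i)$, I would take $k(i+1)$ so large that the annulus $S(3^{k(i+1)}) \setminus S(2\cdot 3^{k(i)})$ contains $m = k(i+1) - k(i)$ disjoint concentric sub-annuli $A_l := S(3^l)\setminus S(2\cdot 3^{l-1})$, $l = k(i)+1, \dots, k(i+1)$, each of bounded aspect ratio and separated from its neighbours by a gap comparable to its own scale. A $(+)$-circuit surrounding the hole of any single $A_l$ lies in the big annulus and surrounds $S(3^{k(i)})$, so it suffices to show that the probability that \emph{none} of the $m$ sub-annuli carries such a circuit tends to $0$ as $m\to\infty$, uniformly in $k(i)$.

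First I would establish a scale-independent single-annulus bound: there is $p_0 > 0$, independent of $l$ and of $h\geq h_c(\beta)$, such that $A_l$ carries a surrounding $(+)$-circuit with probability at least $p_0$. This is the standard gluing of crossings. Placing four overlapping rectangles at the top, bottom, left and right of $A_l$, each of the shape $[0,6n']\times[0,n']$ up to a translation/rotation (here $n' = 3^{l-1}$), Lemma \ref{lemma:3.3} gives each a long-way $(+)$-crossing with probability at least $\delta_6$; in each of the four corner squares the horizontal crossing of one rectangle and the vertical crossing of the adjacent one must meet at a common $(+)$-vertex, so the four crossings link into a surrounding circuit. As the crossing events are increasing, the FKG inequality for the Ising model bounds the probability of their intersection below by $\delta_6^4 =: p_0$.

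The boosting step is where Kesten's use of independence must be replaced by the mixing property. Let $B_l$ be the (increasing, $\mathcal{F}_{A_l}$-measurable) event that $A_l$ carries a surrounding $(+)$-circuit, and bound $\mu_{\beta,h_c(\beta)}(\bigcap_l B_l^c)$ by conditioning from inside out. For each $l$, enlarge $A_l$ to a region $W_l$ by half the gap on each side, so that all inner sub-annuli lie in $W_l^c$; then the event $\bigcap_{l' < l} B_{l'}^c$ is $\mathcal{F}_{W_l^c}$-measurable, while (\ref{HZ(1.1)}) gives, pointwise in $\omega$,
\[ \mu_{\beta,h_c(\beta)}(B_l^c \mid \mathcal{F}_{W_l^c})(\omega) \le \mu_{\beta,h_c(\beta)}(B_l^c) + \varepsilon_l \le 1 - p_0 + \varepsilon_l, \qquad \varepsilon_l := C|A_l|\,d(A_l,W_l^c)\,e^{-\alpha d(A_l,W_l^c)}. \]
Averaging against $\bigcap_{l'<l} B_{l'}^c$ and iterating yields $\mu_{\beta,h_c(\beta)}(\bigcap_l B_l^c) \le \prod_l (1 - p_0 + \varepsilon_l)$.

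The main obstacle, and the only place where something beyond Kesten's scheme is needed, is to guarantee that this product still collapses to $0$ despite the errors $\varepsilon_l$. Here the geometric choice of scales pays off: since $d(A_l, W_l^c) \asymp 3^l$ while $|A_l| \asymp 9^l$, each $\varepsilon_l$ is super-exponentially small in $3^l$, so $\varepsilon_l \to 0$ and $1 - p_0 + \varepsilon_l \le 1 - p_0/2$ for all but finitely many $l$. Consequently $\prod_{l} (1-p_0+\varepsilon_l) \to 0$ as $m \to \infty$, whence $\alpha_i \ge \mu_{\beta,h_c(\beta)}(\bigcup_l B_l) = 1 - \mu_{\beta,h_c(\beta)}(\bigcap_l B_l^c) \to 1$ as $m = k(i+1)-k(i) \to \infty$. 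Setting $k(1):=1$ and then, recursively, choosing $k(i+1) > k(i)$ large enough that $\alpha_i \ge 1 - 2^{-i}$ produces the required strictly increasing sequence with $\alpha_i \to 1$.
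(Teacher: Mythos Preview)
Your proposal is correct and follows exactly the approach the paper indicates: the paper's proof is only the one-line sketch ``This lemma can be obtained by Lemma~\ref{lemma:3.3}, the FKG inequality and the mixing property (for a similar argument, see \cite{Hig93a}, Lemma~5.3),'' and your argument is precisely a fleshing-out of that recipe, building the single-annulus circuit from four $\delta_6$-crossings via FKG and then decoupling the nested sub-annuli through the mixing estimate~\eqref{HZ(1.1)}. The only cosmetic point worth noting is that for the finitely many small scales $l$ the factor $1-p_0+\varepsilon_l$ may exceed $1$, but since the iterated bound $\mu(\bigcap_{l'\le l}B_{l'}^c)\le (1-p_0+\varepsilon_l)\,\mu(\bigcap_{l'<l}B_{l'}^c)$ is still valid and only finitely many such factors occur, the product still tends to $0$ as $k(i+1)-k(i)\to\infty$.
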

This lemma can be obtained by Lemma \ref{lemma:3.3}, the FKG inequality and the mixing property (for a similar argument, see \cite{Hig93a}, Lemma 5.3).
We fix a sequence $\{ k(i) \}$ stated above, and put
\[ A(i) := S(3^{k(i+1)}) \setminus S(2 \cdot 3^{k(i)}). \]
The set of circuits surrounding the origin is denoted by $\Sigma$. For $i=1,2,\ldots$
\[ \Sigma (i) := \{ \mathcal{C} \in \Sigma : \mathcal{C} \subset A(i) \} \]
is the set of circuits in $A(i)$ surrounding $S(2 \cdot 3^{k(i)})$. For a circuit $\mathcal{C} \in \Sigma (i)$, we define
\[ F_i (\mathcal{C}) := \{ \mbox{$\mathcal{C}$ is the innermost $(+)$-circuit in $A(i)$ surrounding $S(2 \cdot 3^{k(i)})$} \}, \]
and
\[ F_i := \{ \mbox{there exists a $(+)$-circuit in $\Sigma (i)$} \}=\bigcup_{\mathcal{C} \in \Sigma (i)} F_i (\mathcal{C}).  \]
Then, 
\begin{equation} \label{Kinc(15)}
\displaystyle \alpha_i =\mu_{\beta,h_c(\beta)} \bigl( F_i \bigr) = \sum_{\mathcal{C} \in \Sigma (i)} \mu_{\beta,h_c(\beta)}  \bigl( F_i (\mathcal{C}) \bigr).
\end{equation}

Now let $E$ be any cylinder set depending only on spins in $S(l)$, and let $l <3^{k(i)} < 3^{k(i+1)} < n$. Then
\begin{align*}
&E \cap \{ \mathbf{O} \rightsquigarrow S^c(n) \} \\
&=\bigl( E \cap F_i^c \cap \{ \mathbf{O}  \rightsquigarrow S^c(n) \}\bigr) \cup \left(\bigcup_{\mathcal{C} \in \Sigma(i)} E \cap F_i(\mathcal{C}) \cap \{ \mathbf{O}  \rightsquigarrow S^c(n) \} \right).
\end{align*}
By the Markov property, we have
\begin{align}
&\mu_{\beta,h} \bigl( E \cap F_i(\mathcal{C}) \cap \{ \mathbf{O}  \rightsquigarrow S^c(n) \} \bigr) \notag \\
&= \mu_{\beta,h} \bigl(E \cap F_i(\mathcal{C}) \cap \{ \mathbf{O}  \rightsquigarrow \mathcal{C}  \} \bigr) 
\mu_{\beta,h} \bigl(\mathcal{C} \rightsquigarrow S^c(n) \,|\, [ \mathcal{C}]_+ \bigr), \label{eq:(3.2)}
\end{align}
where we put
\[
[\mathcal{C}]_+ = 
\{ \omega \in \Omega \, ; \, \omega (x) =+1, x \in \mathcal{C} \} . 
\]
Let
\[ \gamma(\mathcal{C},n) :=  \mu_{\beta,h} \bigl(\mathcal{C} \rightsquigarrow S^c(n)  \,|\, [\mathcal{C}]_+ \bigr).\]
Then by the FKG inequality, for $h \geq h_c(\beta)$,
\begin{align}
&\left| \mu_{\beta,h} \bigl( E \cap \{ \mathbf{O} \rightsquigarrow S^c(n)\} \bigr) 
- \sum_{\mathcal{C} \in \Sigma (i)} \mu_{\beta,h} \bigl(E \cap F_i(\mathcal{C}) \cap \{ \mathbf{O} \rightsquigarrow \mathcal{C} \} \bigr) \gamma(\mathcal{C},n) \right| \notag \\
&\leq \mu_{\beta,h} \bigl( F_i^c \cap \{ \mathbf{O} \rightsquigarrow S^c(n) \} \bigr) \leq (1-\alpha_i) \mu_{\beta,h} \bigl( \mathbf{O} \rightsquigarrow S^c(n) \bigr) . \label{eq:(3.3)}
\end{align}

In the same way, we obtain the following: For
$i<j$, $\mathcal{C}\in \Sigma (i)$ and $\mathcal{D}\in \Sigma(j)$, let
\[ M(\mathcal{C},\mathcal{D},j) := \mu_{\beta,h} \bigl( F_j(\mathcal{D}) \cap \{ \mathcal{C} \rightsquigarrow \mathcal{D} \} \,|\, [\mathcal{C}]_+  \bigr). \]
Then for $\mathcal{C} \in \Sigma (i)$ and $3^{k({i+1})} < 3^{k(j)} < 3^{k({j+1})} < n$, we have
\begin{align}
&\left| \gamma(\mathcal{C},n) - \sum_{\mathcal{D} \in \Sigma (j)} M(\mathcal{C},\mathcal{D},j) \gamma(\mathcal{D},n) \right| \notag \\
&\leq \mu_{\beta,h} ( F_j^c \,|\, [\mathcal{C}]_+ ) \gamma(\mathcal{C},n) \leq (1-\alpha_j) \gamma(\mathcal{C},n). \label{eq:(3.5)}
\end{align}
The last inequality is by the FKG inequality.
\begin{lemma}[cf. \cite{K86i} p.376]
If for any $\mathcal{C}',\mathcal{C}'' \in \Sigma(i)$
\begin{equation} \label{eq:(3.6)}
\lim_{n \to \infty} \dfrac{\gamma (\mathcal{C}',n)}{\gamma (\mathcal{C}'',n)} 
\end{equation}
exists, then the limit \eqref{eq:thm1.1(2)} exists.
\end{lemma}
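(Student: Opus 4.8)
The plan is to feed the innermost-circuit decomposition of \eqref{eq:(3.2)}--\eqref{eq:(3.3)} into the conditional probability $\mu_{\beta,h}(E\mid\#\mathbf{C}_0^+=\infty)$ and to isolate the one $h$-dependent ingredient that survives, namely a ratio of connection weights which the hypothesis forces to converge. First I would fix $i$ with $l<3^{k(i)}$ and rewrite the decomposition with $S^c(n)$ replaced by ``$\infty$'': since $\{\mathbf{O}\rightsquigarrow S^c(n)\}\downarrow\{\mathbf{O}\rightsquigarrow\infty\}=\{\#\mathbf{C}_0^+=\infty\}$ and $\gamma(\mathcal{C},n)\downarrow\theta(\mathcal{C},h):=\mu_{\beta,h}(\mathcal{C}\rightsquigarrow\infty\mid[\mathcal{C}]_+)$ as $n\to\infty$, monotone convergence turns \eqref{eq:(3.3)} into
\[
\Bigl|\mu_{\beta,h}\bigl(E\cap\{\mathbf{O}\rightsquigarrow\infty\}\bigr)-\sum_{\mathcal{C}\in\Sigma(i)}\mu_{\beta,h}\bigl(E\cap F_i(\mathcal{C})\cap\{\mathbf{O}\rightsquigarrow\mathcal{C}\}\bigr)\,\theta(\mathcal{C},h)\Bigr|\le(1-\alpha_i)\,\mu_{\beta,h}(\mathbf{O}\rightsquigarrow\infty),
\]
valid for every $h>h_c(\beta)$, where the percolation probability is positive. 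Taking $E=\Omega$ gives the companion estimate for the denominator $\mu_{\beta,h}(\mathbf{O}\rightsquigarrow\infty)$.

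Dividing numerator and denominator by $\theta(\mathcal{C}'',h)$ for a fixed reference circuit $\mathcal{C}''\in\Sigma(i)$, I obtain
\[
\mu_{\beta,h}\bigl(E\mid\#\mathbf{C}_0^+=\infty\bigr)=\frac{\sum_{\mathcal{C}\in\Sigma(i)}a_h(E,\mathcal{C})\,r_h(\mathcal{C})}{\sum_{\mathcal{C}\in\Sigma(i)}a_h(\Omega,\mathcal{C})\,r_h(\mathcal{C})}+O(1-\alpha_i),
\]
with $a_h(E,\mathcal{C}):=\mu_{\beta,h}(E\cap F_i(\mathcal{C})\cap\{\mathbf{O}\rightsquigarrow\mathcal{C}\})$ and $r_h(\mathcal{C}):=\theta(\mathcal{C},h)/\theta(\mathcal{C}'',h)$, the error being $O(1-\alpha_i)$ uniformly in $h$ after the denominators cancel. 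The sum over $\Sigma(i)$ is finite, so the only issue is the behaviour of the two finite families of coefficients as $h\searrow h_c$. The coefficients $a_h(E,\mathcal{C})$ depend only on spins in the finite box $S(3^{k(i+1)})$, hence converge to $a_{h_c}(E,\mathcal{C})$ by continuity of the unique Gibbs measure in $h$ near $h_c(\beta)>0$.

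The heart of the matter is the convergence of the weight ratios $r_h(\mathcal{C})$ as $h\searrow h_c$ and their identification with the ratio \eqref{eq:(3.6)} supplied by hypothesis. Writing $\gamma_h(\mathcal{C},n)$ for the quantity $\gamma(\mathcal{C},n)$ of \eqref{eq:(3.3)} with its field dependence displayed, so that $\theta(\mathcal{C},h)=\lim_n\gamma_h(\mathcal{C},n)$ and $\gamma_{h_c}(\mathcal{C},n)$ is the object in \eqref{eq:(3.6)}, I would exploit three facts: $\gamma_h(\mathcal{C},n)$ is nondecreasing in $h$ and nonincreasing in $n$ (by FKG monotonicity of $\mu_{\beta,h}$ conditioned on the increasing event $[\mathcal{C}]_+$, and by $\{\mathcal{C}\rightsquigarrow S^c(n)\}\supseteq\{\mathcal{C}\rightsquigarrow S^c(n{+}1)\}$); $\gamma_h(\mathcal{C},n)\to\gamma_{h_c}(\mathcal{C},n)$ as $h\searrow h_c$ for each fixed $n$; and $\gamma_{h_c}(\mathcal{C},n)/\gamma_{h_c}(\mathcal{C}'',n)\to\rho(\mathcal{C})$ as $n\to\infty$ by hypothesis. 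Sandwiching $\theta(\cdot,h)$ at a scale $n(h)\to\infty$ chosen to grow no faster than the scale on which $\mu_{\beta,h}$ still resembles $\mu_{\beta,h_c}$, I would conclude $r_h(\mathcal{C})\to\rho(\mathcal{C})$. This is precisely the step where Kesten used independence to decouple the connection to infinity from the circuit configuration; here it must be carried out through the fine mixing property \eqref{eq:fine mixing}, which provides the uniformity in $h$ of the ratio convergence on a neighbourhood of $h_c$. I expect this interchange of the limits $h\searrow h_c$ and $n\to\infty$, equivalently the uniform-in-$h$ control of $\gamma_h(\mathcal{C},n)/\gamma_h(\mathcal{C}'',n)$, to be the main obstacle.

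Finally I would let $i\to\infty$. For each fixed $i$ the previous steps place both $\limsup_{h\searrow h_c}$ and $\liminf_{h\searrow h_c}$ of $\mu_{\beta,h}(E\mid\#\mathbf{C}_0^+=\infty)$ within $O(1-\alpha_i)$ of the common value $L_i:=\bigl(\sum_{\mathcal{C}}a_{h_c}(E,\mathcal{C})\rho(\mathcal{C})\bigr)\big/\bigl(\sum_{\mathcal{C}}a_{h_c}(\Omega,\mathcal{C})\rho(\mathcal{C})\bigr)$. Since $\alpha_i\to1$, the $\limsup$ and $\liminf$ coincide, so the limit \eqref{eq:thm1.1(2)} exists and equals $\lim_i L_i$. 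The identical computation with $\theta(\mathcal{C},h)$ replaced by $\gamma(\mathcal{C},n)$ and $h\searrow h_c$ by $n\to\infty$ yields \eqref{eq:thm1.1(1)} with the same value $\lim_i L_i$, which is why the two limits agree.
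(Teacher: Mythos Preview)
The paper does not supply its own proof of this lemma; it simply defers to Kesten \cite{K86i}, p.376, and your outline is Kesten's reduction. The decomposition by innermost circuits, the cancellation of a reference weight $\theta(\mathcal{C}'',h)$ to produce finite sums of ratios, the convergence of the local coefficients $a_h(E,\mathcal{C})$ by continuity of the Gibbs measure in $h$, and the final $i\to\infty$ step squeezing $\limsup$ and $\liminf$ together are all correct and are exactly how the argument runs.

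Where you make life harder than necessary is in reading the hypothesis \eqref{eq:(3.6)} as a statement at $h=h_c$ only, so that the interchange of the limits $h\searrow h_c$ and $n\to\infty$ for the ratios $r_h(\mathcal{C})$ becomes, in your words, ``the main obstacle'' requiring a fresh use of the fine mixing property and a tuned scale $n(h)$. In this paper $\gamma(\mathcal{C},n)$ is defined with $\mu_{\beta,h}$ and carries an implicit $h$; what the paper establishes immediately after the lemma, via the iteration \eqref{eq:(3.15)} and Proposition~\ref{lem:3.2}, is that the ratio $\gamma(\mathcal{C}',n)/\gamma(\mathcal{C}'',n)$ is Cauchy in $n$ \emph{uniformly in $h\ge h_c(\beta)$}. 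The mixing property is spent there, in the proof of Proposition~\ref{lem:3.2}, not inside the present lemma. With that uniformity taken as the intended hypothesis, the interchange of limits is routine: choose $n_0$ so large that the ratio is within $\varepsilon$ of its limit for every $h\ge h_c$, then send $h\searrow h_c$ at the finite scale $n_0$ using continuity of the unique Gibbs measure on cylinder events. Your scale-$n(h)$ sandwich and the second appeal to \eqref{eq:fine mixing} are unnecessary.
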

By \eqref{eq:(3.5)}, for fixed $i$ and $\varepsilon > 0$, we can find a $j$ such that
\[ e^{-\varepsilon} \gamma (\mathcal{C},n) \leq \sum_{\mathcal{D} \in \Sigma (j)} M(\mathcal{C},\mathcal{D},j) \gamma(\mathcal{D},n)  \leq e^{\varepsilon} \gamma (\mathcal{C},n) , \]
uniformly in $\mathcal{C} \in \Sigma(i)$ and $h \geq h_c(\beta)$. 
Iterating this argument for $\varepsilon 2^{-(s-1)}$ at the $s$-th step,
we can find $i \leq j_1 < j_2 < \cdots < j_s$, depending only on $i$ and $\varepsilon$, such that
\begin{align}
&e^{-2\varepsilon} \gamma (\mathcal{C},n) \notag \\
&\leq \sum_{\mathcal{D}_1 \in \Sigma ({j_1})}\sum_{\mathcal{D}_2 \in \Sigma ({j_2})} \cdots\sum_{\mathcal{D}_s \in \Sigma ({j_s})} \left( \prod_{m=1}^{s} M(\mathcal{D}_{m-1},\mathcal{D}_m,j_m)
\right)  \gamma(\mathcal{D}_s,n) \notag \\
&\leq e^{2\varepsilon} \gamma (\mathcal{C},n) \label{eq:(3.15)}
\end{align}
for all $h \geq h_c(\beta)$ and $n > 3^{k(j_s+1)}$. Here we put $\mathcal{D}_0=\mathcal{C}$.

We further assume that these subsequences $\{ j_s\}$ are chosen sufficiently
large so that for each $m\geq 1$, we can find some integer $t$
such that $3^{k(j_m+1)}<3^{-2}t<t<3^3t<3^{k(j_{m+1})}$, and
\begin{equation}\label{eq:cond for js}
\frac{C}{C_1}(2t)^4 e^{-2\alpha t}< \frac{1}{2}\min_{{\mathcal D}\in \Sigma (j_m)}
\mu_{\beta ,h}([\mathcal{D}]_+),
\end{equation}
uniformly in $h\geq h_c(\beta )$.
Indeed, since $\# \mathcal{D} \leq w(m):=3^{2k(j_m+1)}$ for any $\mathcal{D}\in \Sigma (j_m)$, we have
\[
\min_{{\mathcal D}\in \Sigma (j_m)}\mu_{\beta ,h}([\mathcal{D}]_+)\geq 
\mu_{\beta ,h}(\omega (\mathbf{O})=+1)^{w(m)}\geq \left(\frac{1}{2}\right)^{w(m)}
\]
by the FKG inequality. This shows that we can obtain \eqref{eq:cond for js} when we choose a suitable $t$ of the same order as $w(m)$.

The following proposition is essentially the key to the proof of Theorem
\ref{thm:1.1}, and after this proposition, the argument is the same
as in \cite{K86i}. 
\begin{proposition} \label{lem:3.2}
There exists a constant $1 < \kappa < \infty$ (independent of $\varepsilon $ and $\{j_m\}$ chosen above) such that for all $h \geq h_c(\beta)$, $\mathcal{D}', \mathcal{D}'' \in \Sigma(j_m)$,
\[ \dfrac{M(\mathcal{D}',\mathcal{E}',j_m)M(\mathcal{D}'',\mathcal{E}'',j_m)}{M(\mathcal{D}',\mathcal{E}'',j_m)M(\mathcal{D}'',\mathcal{E}',j_m)} \leq \kappa^2 \]
for every $m\geq 1$.
\end{proposition}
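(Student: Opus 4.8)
The plan is to reduce the statement to a two-sided comparison of $M(\mathcal{D},\mathcal{E})$ with a product of an inner factor and an outer factor. Writing $\mathcal{D}\in\Sigma(j_m)$ for an inner circuit, $\mathcal{E}\in\Sigma(j_{m+1})$ for an outer one, and $t$ for the intermediate scale supplied by \eqref{eq:cond for js}, the separation of scales there guarantees that $\mathcal{D}\subset S(t/9)$ while $\mathcal{E}$ and the whole annulus carrying its innermost-circuit event lie outside $S(27t)$. I aim to produce a quantity $V(\mathcal{E})$, depending only on $\mathcal{E}$, an inner quantity $U(\mathcal{D})$ defined below, and a constant $\lambda>1$ (independent of $\varepsilon$, of $\{j_m\}$, of $h\ge h_c(\beta)$, and of the circuits) such that
\[ \lambda^{-1}\,U(\mathcal{D})\,V(\mathcal{E})\ \le\ M(\mathcal{D},\mathcal{E})\ \le\ \lambda\,U(\mathcal{D})\,V(\mathcal{E}). \]
Granting this, each of $U(\mathcal{D}')$, $U(\mathcal{D}'')$, $V(\mathcal{E}')$, $V(\mathcal{E}'')$ appears once in the numerator and once in the denominator of the displayed ratio, so they cancel and the ratio is at most $\lambda^4$; one then sets $\kappa:=\lambda^2$.

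The factorization is obtained by conditioning on a gateway $(+)$-circuit at the intermediate scale. Fix the annulus $\mathcal{A}:=S(2t)\setminus S(t)$, which lies strictly between $\mathcal{D}$ and $\mathcal{E}$, and let $\mathcal{G}$ be the innermost $(+)$-circuit in $\mathcal{A}$ surrounding $S(t)$. Since a $(+)$-circuit separates the plane and is itself $(+)$-connected, on $\{\mathcal{G}=g\}$ the connection $\{\mathcal{D}\rightsquigarrow\mathcal{E}\}$ coincides with $\{\mathcal{D}\rightsquigarrow g\}\cap\{g\rightsquigarrow\mathcal{E}\}$, where $\{\mathcal{D}\rightsquigarrow g\}$ and $\{\mathcal{G}=g\}$ are determined by the spins on and inside $g$, while $\{g\rightsquigarrow\mathcal{E}\}$ and the innermost-circuit event for $\mathcal{E}$ are determined by the spins on and outside $g$. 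The Markov property then factorizes, exactly as in \eqref{eq:(3.2)}, giving
\[ M(\mathcal{D},\mathcal{E})=\sum_{g}a(\mathcal{D},g)\,b(g,\mathcal{E})+\text{(no-circuit term)}, \]
with $a(\mathcal{D},g):=\mu_{\beta,h}\bigl(\{\mathcal{D}\rightsquigarrow g\}\cap\{\mathcal{G}=g\}\,\big|\,[\mathcal{D}]_+\bigr)$ and $b(g,\mathcal{E}):=\mu_{\beta,h}\bigl(\{g\rightsquigarrow\mathcal{E}\}\cap F_{j_{m+1}}(\mathcal{E})\,\big|\,[g]_+\bigr)$. The no-circuit term is made negligible by running the same construction over several disjoint annuli at scale $t$ between $\mathcal{D}$ and $\mathcal{E}$: by Lemma \ref{lemma:3.3} and the FKG inequality the probability of finding no surrounding $(+)$-circuit is exponentially small in the number of annuli, hence far below the polynomially small main term supplied by Lemma \ref{lem:one-arm}. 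I set $U(\mathcal{D}):=\sum_{g}a(\mathcal{D},g)$.

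The crux, and the single place where Kesten's use of independence must be replaced, is to show that the outer factor is comparable to an $\mathcal{E}$-only quantity uniformly over the gateway circuit: there is $\lambda>1$ with $\lambda^{-1}V(\mathcal{E})\le b(g,\mathcal{E})\le\lambda\,V(\mathcal{E})$ for every possible gateway circuit $g\subset\mathcal{A}$, where $V(\mathcal{E}):=\mu_{\beta,h}\bigl(\{\partial S(3t)\rightsquigarrow\mathcal{E}\}\cap F_{j_{m+1}}(\mathcal{E})\bigr)$. In Bernoulli percolation this is immediate, since conditionally on $[g]_+$ the configuration outside $g$ is independent of the shape of $g$; in the Ising model the conditional law outside $g$ does depend on $g$, but only through the $(+)$-boundary it induces at scale $t$, whereas the target event sits at scale $\ge 27t$. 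The fine mixing property \eqref{eq:fine mixing} bounds precisely this dependence, showing that the far-field law is insensitive to the shape of $g$ up to a multiplicative constant; Lemma \ref{lemma:3.3} and the FKG inequality then connect any such $g$ to the common circle $\partial S(3t)$ with probability bounded below, which identifies $b(g,\mathcal{E})$ with $V(\mathcal{E})$ up to the constant $\lambda$. Summing over $g$ yields $M(\mathcal{D},\mathcal{E})\asymp U(\mathcal{D})V(\mathcal{E})$, and hence the proposition.

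The main obstacle is twofold. First, the innermost-circuit event $F_{j_{m+1}}(\mathcal{E})$ is not increasing, so the comparison of $b(g,\mathcal{E})$ across different $g$ cannot be run by a naive FKG argument; the remedy is to route everything through the Markov property across the gateway circuit, where no monotonicity is needed, and to exploit that on $F_{j_{m+1}}(\mathcal{E})$ the circuit $\mathcal{E}$ is itself a $(+)$-circuit, so that the outer factor becomes a connection problem between two $(+)$-circuits to which the mixing property applies cleanly. Second, one must verify that the additive errors produced by \eqref{eq:fine mixing} are genuinely of smaller order than the connection probabilities they perturb; this is exactly the purpose of the calibration \eqref{eq:cond for js}, which forces the mixing error $\tfrac{C}{C_1}(2t)^4e^{-2\alpha t}$ below $\tfrac12\min_{\mathcal{D}\in\Sigma(j_m)}\mu_{\beta,h}([\mathcal{D}]_+)$, the one-arm bound of Lemma \ref{lem:one-arm} supplying the matching lower bound on the main term.
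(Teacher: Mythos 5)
Your high-level skeleton coincides with the paper's: both proofs establish a two-sided factorization $\lambda^{-1}U(\mathcal{D})V(\mathcal{E})\le M(\mathcal{D},\mathcal{E},\cdot)\le\lambda\, U(\mathcal{D})V(\mathcal{E})$ with an inner factor depending only on $\mathcal{D}$ and an outer factor depending only on $\mathcal{E}$, and then cancel in the cross-ratio; indeed your $V(\mathcal{E})$ is exactly the paper's outer factor $\mu_{\beta,h}\bigl(F(\mathcal{E})\cap\{\partial S(3t)\rightsquigarrow\mathcal{E}\}\bigr)$ in \eqref{eq:(3.21)}, where the paper takes $U(\mathcal{D})=\gamma(\mathcal{D},t)$. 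The difference is the route to the factorization, and your route has a genuine gap at its crux.

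The gap is the claim that $b(g,\mathcal{E})=\mu_{\beta,h}\bigl(\{g\rightsquigarrow\mathcal{E}\}\cap F(\mathcal{E})\,\big|\,[g]_+\bigr)$ is comparable to $V(\mathcal{E})$ \emph{uniformly over gateway circuits} $g\subset S(2t)\setminus S(t)$, justified by \eqref{eq:fine mixing}. The mixing estimates \eqref{eq:fine mixing} and \eqref{HZ(1.1)} are \emph{additive}, and FKG is unavailable here since $F(\mathcal{E})$ is not monotone (as you note). To turn an additive bound into the multiplicative comparison you need, you must divide by the probability of the conditioning event: for $B\in\mathcal{F}_{S^c(3t)}$ one gets
\begin{equation*}
\mu_{\beta,h}(B\,|\,[g]_+)\;\le\;\mu_{\beta,h}(B)\left(1+\frac{C t^3 e^{-\alpha t}}{\mu_{\beta,h}([g]_+)}\right),
\end{equation*}
and symmetrically for the lower bound. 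But a circuit surrounding $S(t)$ has at least $8t$ (and possibly order $t^2$) vertices, so $\mu_{\beta,h}([g]_+)$ is exponentially small in $|g|$ — possibly of order $e^{-ct^2}$, and in any case with an exponent that nothing in the paper relates to the mixing rate $\alpha$. The error term therefore swamps the main term, and the uniform comparison of $b(g,\mathcal{E})$ across different $g$ cannot be extracted from the available estimates. This is not a technicality: it is exactly the difficulty that the calibration \eqref{eq:cond for js} is built to resolve, and it resolves it only because there the conditioned circuits are the inner circuits $\mathcal{D}$, which range over a finite set fixed \emph{before} $t$ is chosen, so that $t$ can be taken large enough to satisfy $\frac{C}{C_1}(2t)^4e^{-2\alpha t}<\frac12\min_{\mathcal{D}}\mu_{\beta,h}([\mathcal{D}]_+)$. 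Your gateway circuits live at scale $t$ itself, so no choice of $t$ can ever beat $\min_g\mu_{\beta,h}([g]_+)$; this uniformity over $g$ is precisely the step where independence (which makes it trivial in Bernoulli percolation) cannot be replaced by the paper's mixing property. The paper sidesteps the trap by never conditioning on a circuit at scale $t$: its upper bound uses only the inclusion $\{\mathcal{D}\rightsquigarrow\mathcal{E}\}\subset\{\mathcal{D}\rightsquigarrow S^c(t)\}\cap\{\partial S(3t)\rightsquigarrow\mathcal{E}\}$ with mixing errors calibrated against $\mu_{\beta,h}([\mathcal{D}]_+)$, and its lower bound conditions on $[\mathcal{D}]_+$ and on the full configurations $[\Xi]$ filling the inside of $\mathcal{E}$, again with all errors controlled by \eqref{eq:cond for js} and Lemma \ref{lem:one-arm}. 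A secondary, repairable, issue: your treatment of the no-circuit term is quantitatively off, since between $S(t)$ and $S(27t)$ only a bounded number of disjoint annuli fit, so the no-circuit probability is only a constant less than $1$, not smaller than the polynomial main term $C_1t^{-1}$; fortunately that term only needs to be $O(U(\mathcal{D})V(\mathcal{E}))$, which follows from the paper's upper-bound argument rather than from any smallness.
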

\begin{proof}
Let $\partial S(n)=S(n+1) \setminus S(n)$.
We shall prove that there exists a $\kappa > 1$ such that for 
$\mathcal{D} \in \Sigma (j_{m-1}),\, \mathcal{E} \in \Sigma (j_m)$ one has 
\begin{align}
& \kappa^{-1} \gamma(\mathcal{D},t) \mu_{\beta,h} \left( F_{j_m}(\mathcal{E}),\,
\partial S(3t) \rightsquigarrow \mathcal{E} \right) \notag\\
&\leq M(\mathcal{D},\mathcal{E},j_m) \notag \\
&\leq \kappa \gamma(\mathcal{D},t) \mu_{\beta,h} \left( F_{j_m}(\mathcal{E}),\,
\partial S(3t) \rightsquigarrow \mathcal{E} \right). \label{eq:(3.21)}
\end{align}
Here, $t$ is the integer satisfying
$3^{k(j_m+1)}<3^{-2}t<t<3^3t<3^{k(j_{m+1})}$, and 
the condition (\ref{eq:cond for js}).

First we prove the second inequality in \eqref{eq:(3.21)}. 

Since 
$
\{ \mathcal{D} \rightsquigarrow \mathcal{E} \} \subset
\{ \mathcal{D} \rightsquigarrow S^c(t)\} \cap \{ \partial S(3t) 
\rightsquigarrow \mathcal{E}\}
$, we have
\begin{align*}
&\mu_{\beta ,h}( 
[\mathcal{D}]_+\cap \{ \mathcal{D} \rightsquigarrow \mathcal{E} \}\cap 
F_{j_m}(\mathcal{E}))\\
&\leq \mu_{\beta ,h}\left( 
[\mathcal{D}]_+\cap
\{ \mathcal{D} \rightsquigarrow S^c(t)\} \mid \{ \partial S(3t) 
\rightsquigarrow \mathcal{E}\} \cap  F_{j_m}(\mathcal{E}) \right) \\
&\times \mu_{\beta ,h}\left(\{ \partial S(3t) 
\rightsquigarrow \mathcal{E}\} \cap  F_{j_m}(\mathcal{E}) \right).
\end{align*}
By the mixing property (\ref{HZ(1.1)}),
\begin{align*}
&\mu_{\beta ,h}\left( 
[\mathcal{D}]_+\cap
\{ \mathcal{D} \rightsquigarrow S^c(t)\} \mid \{ \partial S(3t) 
\rightsquigarrow \mathcal{E}\} \cap  F_{j_m}(\mathcal{E}) \right) \\
&\leq \mu_{\beta ,h}( [\mathcal{D}]_+\cap 
\{ \mathcal{D} \rightsquigarrow S^c(t)\} 
       ) +C(2t)^3e^{-\alpha \cdot 2t}.
\end{align*}
But by the FKG inequality and by Lemma \ref{lem:one-arm}, 
the first term in the right hand side is not less than
\[
\mu_{\beta ,h}([\mathcal{D}]_+)C_1t^{-1},
\]
which is not less than $ 2C(2t)^3e^{-\alpha \cdot 2t}$ by (\ref{eq:cond for js}).
Hence, we see that
\begin{align*}
&\mu_{\beta ,h}\left( 
[\mathcal{D}]_+\cap
\{ \mathcal{D} \rightsquigarrow S^c(t)\} \mid \{ \partial S(3t) 
\rightsquigarrow \mathcal{E}\} \cap F_{j_m}(\mathcal{E}) \right) \\
&\leq \frac{3}{2} \mu_{\beta ,h}([\mathcal{D}]_+\cap 
\{ \mathcal{D} \rightsquigarrow S^c(t)\} ).
\end{align*}
Thus, we have
\begin{align*}
&\mu_{\beta ,h}( 
[\mathcal{D}]_+\cap \{ \mathcal{D} \rightsquigarrow \mathcal{E} \}\cap 
F_{j_m}(\mathcal{E}))\\
&\leq \frac{3}{2}\mu_{\beta ,h}\left( 
[\mathcal{D}]_+ \cap \{ \mathcal{D} \rightsquigarrow S^c(t)\} \right) 
\mu_{\beta,h}\left( F_{j_m}(\mathcal{E})\cap \{ \partial S(3t) 
\rightsquigarrow \mathcal{E}\} \right) .
\end{align*}
Dividing both sides by $\mu_{\beta ,h}([\mathcal{D}]_+)$,
we obtain the desired inequality for $\kappa \geq \frac{3}{2}$.

For the first inequality in  (\ref{eq:(3.21)}), let $\mathcal{E}\in \Sigma (j_m)$, and let $\Theta $ be the region in
$A(j_m)$ inside $\mathcal{E}$, namely,
\[
\Theta =\left\{ x \in A(j_m) \, ; \, \begin{array}{@{\,}c@{\,}}
       \mbox{any path connecting $x$ with} \\
        S^c(3^{k(j_m+1)}) \mbox{ intersects }\mathcal{E}
    \end{array}\right\} .
\]
Let $\Xi $ be the configuration in $\Theta $, and we write $[\Xi ]$
for the cylinder set defined by $\Xi$,
\[
[\Xi ] := \{ \omega \in \Omega \, ; \, \omega (x) = \Xi (x), \, x\in \Theta \} .
\]
Fix a configuration $\Xi$ such that $[\Xi ] \subset F_{j_m}(\mathcal{E})$. Set
$u= 3^{-1}t, w=9t<3^{k(j_m+1)}$, and $G=G_1\cap G_2\cap G_3$, where
\begin{align*}
G_1&=\{ \mbox{there exists a $(+)$-circuit surrounding $S(u)$ in $S(t)$}\} , \\
G_2&=\{\mbox{there exists a $(+)$-circuit surrounding $S(3t)$ in $S(w)$}\} ,\\ 
G_3&=\{ \partial S(u)  \rightsquigarrow S^c(w) \} .
\end{align*}
Also, let
\[
H_1=\{  \mathcal{D}\rightsquigarrow S^c(t) \} , \mbox{ and } \,
H_2=\{ \partial S(3t) \rightsquigarrow \mathcal{E}\}.
\]
Then since
\[
\{ \mathcal{D}\rightsquigarrow \mathcal{E} \} \supset G\cap H_1\cap H_2,
\]
we have by the FKG inequality 
\begin{align}
\mu_{\beta ,h}( 
[\mathcal{D}]_+\cap \{ \mathcal{D} \rightsquigarrow \mathcal{E} \}\cap [\Xi ])
&\geq \mu_{\beta ,h}( [\mathcal{D}]_+\cap  G\cap H_1\cap H_2\cap [\Xi] )
\notag\\
&\geq \mu_{\beta ,h}( G\mid [\mathcal{D}]_+\cap [\Xi])
\notag\\
&\times \mu_{\beta ,h}( H_1\mid  [\mathcal{D}]_+\cap [\Xi])\notag\\
&\times \mu_{\beta ,h}(H_2\mid  [\mathcal{D}]_+\cap [\Xi])\notag\\
&\times \mu_{\beta ,h}( [\mathcal{D}]_+\cap [\Xi]) .\label{eq:decomposition}
\end{align}
Further by the FKG inequality, 
\begin{align}
\mu_{\beta ,h}( G \mid [\mathcal{D}]_+ \cap [\Xi])&\geq \mu_{\beta ,h}(G\mid [\Xi]), \label{eq:circular-connection}\\
\mu_{\beta ,h}( H_2\mid [\mathcal{D}]_+ \cap [\Xi])&\geq \mu_{\beta ,h}(H_2\mid [\Xi]). \label{eq:outside 3t}
\end{align}
By the mixing property (\ref{HZ(1.1)}), the
right hand side of (\ref{eq:circular-connection}) is not less than
\[
\mu_{\beta ,h}(G)- C\frac{2u}{3}(9t)^2 e^{\alpha \frac{2u}{3}} \geq \frac{\delta_3^8\delta_{14}}{2}=:C_3>0.
\]
As for $H_1$, since the spins on $\mathcal{D}$ are all positive in 
$[\mathcal{D}]_+$, the mixing property (\ref{eq:fine mixing}) ensures that
\[
\mu_{\beta ,h}( H_1\mid [\mathcal{D}]_+\cap [\Xi] ) 
\geq \mu_{\beta ,h}( H_1\mid [\mathcal{D}]_+) - C(2t)^3 e^{-\alpha \cdot 2t}.
\]
By the FKG inequality and Lemma \ref{lem:one-arm},
\[
\mu_{\beta ,h}( H_1\mid [\mathcal{D}]_+)\geq \mu_{\beta ,h}(H_1)\geq C_1t^{-1}.
\]
Thus, we have by  (\ref{eq:cond for js}), 
\begin{equation}\label{eq:estimate for H_1}
\mu_{\beta ,h}( H_1\mid [\mathcal{D}]_+\cap [\Xi] ) \geq
\frac{1}{2}\mu_{\beta ,h}( H_1\mid [\mathcal{D}]_+ ).
\end{equation}

On the other hand, again by (\ref{eq:cond for js}),
\[
\mu_{\beta ,h}( [\mathcal{D}]_+\mid [\Xi]) \geq \mu_{\beta ,h}([\mathcal{D}]_+)
-C(2t)^3e^{-\alpha \cdot 2t}\geq \frac{1}{2}\mu_{\beta ,h}([\mathcal{D}]_+).
\]
From this we have
\begin{equation}\label{eq:estimate for [Xi]cap [D]}
\mu_{\beta ,h}([\mathcal{D}]_+\cap [\Xi])\geq \frac{1}{2}
\mu_{\beta ,h}([\mathcal{D}]_+)\mu_{\beta ,h}([\Xi])
\end{equation}
Combining (\ref{eq:decomposition})--(\ref{eq:estimate for [Xi]cap [D]}),
we have
\begin{align*}
&\mu_{\beta, h}(([\mathcal{D}]_+\cap \{ \mathcal{D}\rightsquigarrow\mathcal{E}\}
\cap [\Xi] )
\geq \frac{C_3}{4}\mu_{\beta ,h}(H_1\cap [\mathcal{D}]_+)\mu_{\beta ,h}(H_2\cap
[\Xi]).
\end{align*}
Summing this up with respect to $\Xi$ such that $[\Xi]\subset F_{j_m}(\mathcal{E})$,
and then dividing both sides by $\mu_{\beta ,h}([\mathcal{D}]_+)$,
we obtain the desired inequality for $\kappa \geq 4C_3^{-1}$.
\end{proof}


\begin{thebibliography}{9}
\bibitem{ABF87}
Aizenman, M, Barsky, D. J., Fern\'{a}ndez, R.:
The phase transition in a general class of Ising-type models is sharp.
J. Statist. Phys. {\bf 47} 343--374 (1987)
\bibitem{BCM10} 
B\'{a}lint, A., Camia, F., Meester, R.:
The high temperature Ising model on the triangular lattice is a critical Bernoulli percolation model.
J. Statist. Phys. {\bf 139} 122--138 (2010)
\bibitem{Hig93a}
Higuchi, Y.:
Coexistence of infinite $(*)$-clusters. II. Ising percolation in two dimensions.
Probab. Theory Related Fields {\bf 97} 1--33 (1993) 
\bibitem{Hig93b}
Higuchi, Y.: 
A sharp transition for two-dimensional Ising percolation.
Probab. Theory Related Fields {\bf 97} 489--514 (1993) 
\bibitem{HTZ10}
Higuchi, Y., Takei, M., Zhang, Y.:
Basic techniques in two-dimensional critical Ising percolation with investigation of scaling relations.
arXiv:1010.1586
\bibitem{HTZ12}
Higuchi, Y., Takei, M., Zhang, Y.:
Scaling relations for two-dimensional Ising percolation.
J. Statist. Phys. {\bf 148} 777--799 (2012)
\bibitem{K86i} 
Kesten, H.:
The incipient infinite cluster in two-dimensional percolation.
Probab. Theory Related Fields {\bf 73} 369--394 (1986)  
\bibitem{Z95}
Zhang, Y.:
The fractal volume of the two-dimensional invasion percolation cluster.
Commun. Math. Phys. {\bf 167} 237--254  (1995)
\end{thebibliography}
\end{document}